\newcommand\be{\begin{equation}}
\newcommand\ee{\end{equation}}
\newcommand\bea{\begin{eqnarray}}
\newcommand\eea{\end{eqnarray}}
\newcommand\bi{\begin{itemize}}
\newcommand\ei{\end{itemize}}
\newcommand\ben{\begin{enumerate}}
\newcommand\een{\end{enumerate}}
\newcommand\bc{\begin{center}}
\newcommand\ec{\end{center}}
\newcommand\ba{\begin{array}}
\newcommand\ea{\end{array}}
\newtheorem{thm}{Theorem}[section]
\newtheorem{lem}[thm]{Lemma}
\theoremstyle{definition}
\newtheorem{rek}[thm]{Remark}
\begin{document}

\title{Essentialities in additive bases}

\author{Peter Hegarty}
\email{hegarty@math.chalmers.se} \address{Mathematical Sciences,
Chalmers University Of Technology and G\"oteborg University,
G\"oteborg, Sweden}

\subjclass[2000]{11B13 (primary), 11B34 (secondary).} \keywords{Additive
basis, essential subset.}

\date{\today}

\begin{abstract} Let $A$ be an asymptotic basis for 
$\mathbb{N}_0$ of some order. By an {\em essentiality} of $A$
one means a subset $P$ such that $A \backslash P$
is no longer an asymptotic 
basis of any order and such that $P$ is minimal among all
subsets of $A$ with this property. A finite essentiality of $A$ is called an
{\em essential subset}. In a recent paper, Deschamps and 
Farhi asked the following two questions : (i) does every asymptotic basis of 
$\mathbb{N}_0$ possess some essentiality ? (ii) is the number of 
essential subsets of size at most $k$ of an asymptotic 
basis of order $h$ bounded
by a function of $k$ and $h$ only (they showed the number is always finite) ?
We answer the latter question in the affirmative, and the former in the
negative by means of an explicit construction, for every integer $h \geq 2$, of
an asymptotic basis of order $h$ with no essentialities. 
\end{abstract}


\maketitle

\setcounter{equation}{0}

\setcounter{equation}{0}

\section{Introduction}

Let $A \subseteq \mathbb{N}_0$ such that $0 \in A$, 
and $h \geq 2$ an integer. The $h$-fold
sumset of $A$, denoted $hA$, is the subset of $\mathbb{N}_0$ 
consisting of all possible sums of $h$-tuples of
elements of $A$, i.e.:
\be
hA = \{a_1 + \cdots + a_h : a_1,...,a_h \in A\}.     
\ee
We say that $A$ is a {\em basis} (resp. {\em asymptotic basis}) 
for $\mathbb{N}_0$ of order $h$ if the 
difference set $\mathbb{N}_0 \backslash hA$ is empty (resp. 
finite), in other
words, if every (resp. every sufficiently large) non-negative 
integer can be expressed as a sum of at 
most $h$ non-zero elements  of $A$. This is a 
fundamental notion in additive number theory. In the rest of this article,
we will be concerned only with asymptotic bases and we will refer to these
simply as $\lq$bases'. We hope no confusion arises, for those
who are acquainted with the classical terminology. It is important to note
that much of what we discuss has also been the subject of investigation for
ordinary bases, in which case many things actually beome simpler. 
\\
\\
\noindent
In \cite{4} Nathanson 
introduced the following idea : a subset $P$ of a 
basis $A$ of order 
$h$ is said to be {\em necessary} if $A\backslash P$ is no longer a 
basis of order $h$. Nathanson was concerned with so-called 
{\em minimal bases}, 
that is, bases in which every element is necessary. There is by now an 
extensive literature on these : 
see \cite{3} for a state-of-the-art
result. The paper \cite{1} provides a recent 
perepective on another
popular line of research on the subject of necessary subsets of bases. A very
similar notion was the subject of another recent paper of Deschamps and 
Farhi \cite{2}.
They call a subset $P$ of a basis $A$ (of some order) an 
{\em essentiality} if $A\backslash P$ is no longer a basis, of 
any order, and $P$ is minimal, with respect to inclusion, among subsets of
$A$ with this property. A finite essentiality is called an {\em essential
subset}, and, in the case of a singleton set, an {\em essential element}. 
The main difference between the notions of $\lq$necessary' and $\lq$essential' 
subset of a basis is that removing one of the former need only
increase the order of the basis, whereas removing one of the letter destroys 
the basis property entirely. The reader is invited to stop for a moment
here though and, to avoid later confusion, note the additional subtle 
differences between the meanings of the terms as employed by their various
inventors.
\\
\\
\noindent
The main
result of \cite{2}, improving on the work of earlier authors, provides
a tight upper bound on the number of essential elements in a basis, 
purely in terms of the order of the latter. In particular, this number is 
always
finite. They also show that every
basis possesses only finitely many essential subsets. To achieve
the latter result, they first prove that for any basis $A$ there 
exists a largest positive integer $a=a(A)$ such that $A$ is contained, from 
some point onwards, in an arithmetic progression of common difference $a$. 
They then bound the number of essential subsets in terms of the so-called 
{\em radical} of $a$, that is, the number of distinct primes dividing it.
In contrast to the situation with essential elements, 
this does not yield a bound purely in terms of the order of the basis, and
they give an example to show that no such bound is possible.
\\
\\
\noindent
At the end of their paper, Deschamps and Farhi pose two problems. The first is
whether one can give a universal bound on the number of essential 
subsets of size at most $k$, in a basis of order $h$, which is a function 
of $k$ and $h$ only. Their second problem concerns infinite essentialites. 
First note that it is easy to give examples of bases possessing no essential
subsets whatsoever. As an extreme case, 
take $A = \mathbb{N}_0$ itself, which is a basis of order 1. Nevertheless, 
here we can 
still clearly identify infinite essentialities of $A$, namely the 
complements of the sets $p\mathbb{N}$, where $p$ is any prime. The 
second problem posed in \cite{2} 
is whether every basis for $\mathbb{N}_0$ must contain some essentiality, 
albeit possibly infinite. In fact, they went further and asked more (it
is not important here to recall exactly what),
suggesting possibly that they believed the answer to the basic question was 
yes. 
\\
\\
\noindent
In this paper we solve both problems. In Section 2 we will prove a 
bound of the desired form on the number of essential subsets of bounded
size in a basis of a given order. The proof uses ideas developed in 
\cite{2} and is very short. In Section 3, we shall explicitly construct, 
for every $h \geq 2$, a basis for $\mathbb{N}_0$ of order $h$ without 
essentialities. 

\setcounter{equation}{0}
\section{The number of essential subsets in a basis}

The following facts are proven in \cite{2} : 

\begin{lem}
(i) Let $A$ be a basis (of some order) and $P$ an essentiality of 
$A$. Let $d = d(P)$ be the largest integer such that $A\backslash P$ is
contained in an arithmetic progression of common difference $d$. Then 
$d \geq 2$.
\\
(ii) Let $P_1$ and $P_2$ be any two essentialities of $A$ such that
$P_1 \cup P_2 \neq A$. Then $d(P_1)$ and $d(P_2)$ are relatively prime.
\end{lem}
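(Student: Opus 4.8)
The plan is to derive both parts from one criterion about bases, plus elementary manipulations with minimality.

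\emph{The criterion.} First I would prove: if $S\subseteq\mathbb{N}_0$ satisfies $0\in S$, has bounded gaps (some $g$ with $S\cap[m,m+g]\neq\emptyset$ for all large $m$), and the $\gcd$ of its nonzero elements equals $1$, then $S$ is an asymptotic basis of finite order. Indeed, choose $s_1,\dots,s_r\in S\setminus\{0\}$ with $\gcd(s_1,\dots,s_r)=1$; their numerical semigroup is cofinite with some Frobenius number $F$, so each integer in $[F+1,F+g]$ is a sum of at most $h_0:=\lfloor(F+g)/\min_i s_i\rfloor$ of the $s_i$. For large $n$, bounded gaps supply $s\in S$ with $n-s\in[F+1,F+g]$, and then $n=s+(n-s)$ is a sum of at most $h_0+1$ elements of $S$ (pad with $0$). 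Equivalently: a set containing $0$ with bounded gaps that is \emph{not} an asymptotic basis has all its nonzero elements divisible by some fixed prime.

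\emph{A reduction and Part (i).} Since $0\in A$ and deleting $0$ from $P$ cannot change whether $A\setminus P$ is a basis (bases are built from \emph{nonzero} elements), minimality forces $0\notin P$, so $0\in A\setminus P$ always. Hence any arithmetic progression containing $A\setminus P$ begins at $0$, so $d(P)=\gcd(A\setminus P)$ — the gcd of its nonzero elements — and (i) becomes the assertion $\gcd(A\setminus P)\geq2$. Put $B=A\setminus P$; since $A$ is a basis and $B$ is not, $P\neq\emptyset$, so fix $x\in P$. By minimality $B\cup\{x\}=A\setminus(P\setminus\{x\})$ is an asymptotic basis, of some order $h$. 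Writing each large $n$ as a sum of at most $h$ nonzero elements of $B\cup\{x\}$, with $k$ of them equal to $x$, gives $n-kx\in hB$ with $0\leq kx\leq hx$; hence $hB$ meets every interval $[n-hx,n]$ with $n$ large, i.e. $hB$ has bounded gaps, and clearly $\gcd(hB)=\gcd(B)$. If $\gcd(B)=1$ the criterion makes $hB$ an asymptotic basis, of some order $m$, whence $hmB=m(hB)$ is cofinite and $B$ itself is a basis — contradicting that $P$ is an essentiality. So $d(P)=\gcd(B)\geq2$.

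\emph{Part (ii).} Let $P_1\neq P_2$ be essentialities, with $B_i=A\setminus P_i$; the hypothesis $P_1\cup P_2\neq A$ merely records that $B_1\cap B_2\neq\emptyset$ (which in fact always holds, as $0$ lies in both). If $P_1\subseteq P_2$, then $P_1$ is a proper subset of $P_2$ and minimality of $P_2$ would make $A\setminus P_1=B_1$ an asymptotic basis, which it is not; so $P_1\not\subseteq P_2$ and $P_1\cap P_2$ is a proper subset of $P_1$. Minimality of $P_1$ then makes $A\setminus(P_1\cap P_2)=B_1\cup B_2$ an asymptotic basis. But any asymptotic basis $C$ has $\gcd(C)=1$ (otherwise $hC\subseteq\gcd(C)\mathbb{Z}$ misses infinitely many integers), so $1=\gcd(B_1\cup B_2)=\gcd\big(\gcd(B_1),\gcd(B_2)\big)=\gcd\big(d(P_1),d(P_2)\big)$.

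\emph{Main obstacle.} Everything above apart from the criterion is routine bookkeeping with minimality and gcd's; the criterion — a Frobenius-number-plus-syndeticity argument — is the one step carrying real content and the one I would be most careful with, together with keeping straight that "basis'' refers to sums of nonzero elements (which is precisely what yields $0\notin P$ and the clean formula $d(P)=\gcd(A\setminus P)$). I would therefore isolate and prove the criterion first, after which parts (i) and (ii) are each only a few lines.
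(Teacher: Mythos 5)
Your proof is correct; note that the paper itself gives no proof of this lemma, deferring entirely to Deschamps--Farhi \cite{2}, so your argument stands as a self-contained replacement rather than a variant of an in-paper one. Both pillars are sound: the normalization $0\notin P$ (whence $d(P)=\gcd\big((A\setminus P)\setminus\{0\}\big)$) does follow from minimality, since adjoining or deleting $0$ never changes the basis property; and the syndeticity-plus-Frobenius criterion is exactly the engine needed, essentially the same one used in \cite{2} to characterize when deleting a subset destroys the basis property. Part (ii) is then a clean two-line deduction from minimality applied to $P_1\cap P_2\subsetneq P_1$. Three small remarks. First, there is an off-by-one in the criterion: choosing $s\in S\cap[n-F-g,\,n-F]$ only gives $n-s\in[F,F+g]$, and $F$ itself is not representable; replace $F$ by $F+1$ throughout (every integer strictly exceeding the Frobenius number is representable) and the gap closes. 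Second, the lemma as literally stated allows $P_1=P_2$, for which (ii) would contradict (i); you are right to read it as requiring $P_1\neq P_2$. Third, you correctly observe that under this paper's convention $0\in A$ the hypothesis $P_1\cup P_2\neq A$ is automatic; the one caveat is that the introduction also mentions a ``contained from some point onwards'' version of $d$, and had $d(P)$ been meant in that eventual sense your proof of (ii) would not suffice as written (the eventual progression need not pass through $0$, so that $d(P)$ could exceed $\gcd(A\setminus P)$). As the lemma is actually stated, and as it is actually used in the proof of Theorem 2.2 (where $A\setminus X$ is taken to lie in $t\mathbb{Z}$ exactly), the global reading you adopt is the correct one.
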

\noindent
We denote by $p_n \#$ the product of the first $n$ prime numbers :
this is fairly standard notation. We can now prove

\begin{thm}
Let $k,h$ be two positive integers. There exists an integer $\phi(k,h)$
such that any basis for $\mathbb{N}_0$ of order $h$ contains at most 
$\phi(k,h)$ essential subsets of size at most $k$.
\end{thm}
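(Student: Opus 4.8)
The plan is to combine the two structural facts from Lemma 2.1 with the known finiteness bound on *essential elements* (the main result of \cite{2}), and bound everything in terms of how many pairwise-coprime difference parameters $d(P)$ can show up.

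an essential subset $P$ of size $\le k$, I'd first observe that $A\setminus P$ lies in an AP of difference $d(P)\ge 2$ (Lemma 2.1(i)). The key point is that the order $h$ of the basis forces an upper bound on $d(P)$: if $A\setminus P$ sits in an AP of difference $d$, then $h(A\setminus P)$ — indeed $hA$ after removing the finitely many sums involving elements of $P$ — is constrained modulo $d$, and since $hA$ is cofinite in $\mathbb N_0$, each residue class mod $d$ must be hit; but an element of $P$ can only "fix" the residues it contributes to, and with at most $k$ elements of $P$ each used at most $h$ times in any representation, the number of residue classes mod $d$ not already covered by $(A\setminus P)$-only sums is at most something like $hk$ (crudely: the bad residues must all be reachable using at least one element of $P$, and there are at most $\binom{k+h}{h}$-ish such sum-types, each landing in one class). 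Hence $d(P) \le C(k,h)$ for an explicit $C(k,h)$. This is the step I expect to be the main obstacle — making the mod-$d$ counting argument fully rigorous, in particular handling the finitely many exceptional elements of $\mathbb N_0\setminus hA$ and the interaction between "$A\setminus P$ is eventually in the AP" and small elements.

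With $d(P)$ bounded by $C(k,h)$, I'd split the essential subsets of size $\le k$ into two classes. First, those $P$ with $P\cup P' \ne A$ for every other such $P'$: by Lemma 2.1(ii) their $d(P)$ values are pairwise coprime, and since each is an integer in $[2, C(k,h)]$, the number of such $P$ sharing a given value of $d(P)$ must itself be controlled — actually pairwise coprimality among values all $\le C(k,h)$ limits the number of *distinct* prime supports, and combined with the essential-element bound (applied inside each AP, since an essential subset of $A$ of size $\le k$ "spreads out" across at most $k$ residue classes and restricts to essential-type data there) one gets finitely many. Second, the pairs with $P\cup P' = A$: since $|P|,|P'|\le k$ this forces $A$ itself to be finite-modulo-structure in a very restrictive way, giving at most boundedly many such configurations. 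Assembling the two counts yields $\phi(k,h)$.

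So the skeleton is: (1) bound $d(P)\le C(k,h)$ via a residue-class covering argument using the order $h$ and $|P|\le k$; (2) use Lemma 2.1(ii) plus the classical finiteness-of-essential-elements bound to count the essentialities with each fixed small value of $d$; (3) separately dispose of the degenerate case $P\cup P'=A$. Step (1) is where the real work lies; steps (2)–(3) are bookkeeping on top of \cite{2}.
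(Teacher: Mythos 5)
Your argument is sound and reaches the conclusion by a genuinely different, though closely related, route from the paper's. You bound each $d(P)$ individually: since $A\setminus P$ lies in a single residue class $c \bmod d(P)$ and $hA$ is cofinite (hence meets every class mod $d(P)$), every residue mod $d(P)$ is a sum of $h$ residues drawn from the at most $k+1$ values $\{c\}\cup(P \bmod d(P))$, so $d(P)\le\binom{h+k}{k}=:C(k,h)$. The step you flagged as the main obstacle is in fact unproblematic: cofiniteness alone guarantees every class is hit, and Lemma 2.1(i) gives genuine containment of $A\setminus P$ in the progression, so there are no exceptional elements to chase. The paper instead runs the same residue-covering count once, globally: it sets $X=\bigcup_i P_i$ and $t=\prod_i d(P_i)$, observes that $A\setminus X$ lies in a progression of difference $t$ and that pairwise coprimality forces $t\ge p_{\phi}\#$, and concludes $(k\phi+1)^h\ge t\ge p_{\phi}\#$, which bounds $\phi$. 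Your version buys an explicit per-subset bound on $d(P)$ and avoids the primorial; the paper's buys a one-line count and never needs to discuss how many subsets can realize a given modulus.

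Two places where your step (2) should be cleaned up. First, the degenerate case $P\cup P'=A$ never occurs: an asymptotic basis is infinite, while $P\cup P'$ has at most $2k$ elements, so Lemma 2.1(ii) applies to every pair of distinct essential subsets (the paper notes exactly this when it says the $d_i$ are pairwise coprime because each $P_i$ is finite). Second, you do not need the essential-element bound from \cite{2}, nor any argument about several $P$'s \emph{sharing} a value of $d(P)$: pairwise coprime integers that are all at least $2$ are pairwise distinct, and each contributes a prime factor not dividing any of the others, all of these primes being at most $C(k,h)$. Hence the number of essential subsets of size at most $k$ is at most $\pi(C(k,h))$, and the proof is complete without the additional machinery you invoke.
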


\begin{proof}
Let a basis $A$ of order $h$ be given. Let $P_1,...,P_\phi$ be the complete
list of its essential subsets of size at most $k$ 
(we know from \cite{2} that this list is finite).
Let $d_i := d(P_i)$ be the integers defined in Lemma 2.1, for 
$i = 1,...,\phi$. Since each $P_i$ is a finite set we conclude from the 
lemma that the integers $d_1,...,d_{\phi}$ are pairwise relatively prime. 
Let $X := \cup_{i=1}^{\phi} P_i$ and $t := \prod_{i=1}^{\phi} d_i$. Then 
\be
|X| \leq k\phi,
\ee
$A\backslash X$ is contained in an arithmetic progression of common difference
$t$ and 
\be
t \geq p_{\phi} \#. 
\ee
Let $Y := X \cup \{0\}$. Now since $A$ is a basis of order $h$, the 
$h$-fold sumset $hY$ must meet all congruence classes mod $t$. 
At the very least this implies that
\be
|Y| \geq t^{1/h}.
\ee
Eqs. (2.1)-(2.3) imply that 
\be
k \phi  + 1 \geq (p_{\phi} \#)^{1/h},
\ee
which clearly yields an upper bound on $\phi$ depending
only on $k$ and $h$.
\end{proof}

\begin{rek}
It follows from the prime number theorem that 
\be
p_n \# = \exp ((1+o(1)) \cdot n \log n).
\ee
From this and (2.4) we can easily deduce explicit upper bounds.
For example, if $h$ is fixed and $k \rightarrow \infty$ one easily 
shows that $\phi(k,h) = o(\log k)$. On the other hand, if $k$ is 
fixed and $h \rightarrow \infty$ one gets a bound $\phi(k,h) \leq
(1+o(1))h$. It remains to investigate what the best-possible bounds could be,
for example in these two situations. Notice that the latter 
bound is not optimal for $k = 1$, by the main result of \cite{2}.  
\end{rek}
 
\setcounter{equation}{0}
\section{Bases without any essentialities}

The idea for our construction 
is quite simple. Let $A$ be a basis for $\mathbb{N}_0$ of some order
and suppose $P$ is an essentiality of $A$. Then for every $x \in
P$, the set $A_{x,P} := (A \backslash P) \cup \{x\}$ is once again a basis for
$\mathbb{N}_0$. Thus $x$ is an essential element of $A_{x,P}$. By Lemma 2.1(i),
this means that the set $A \backslash P$ is contained in a 
non-trivial arithmetic progression, i.e.: there exists an integer 
$d > 1$ and $c \in \{0,1,...,d-1\}$ such that $a \equiv c \; 
({\hbox{mod $d$}})$ for every $a \in A \backslash P$. Thus, a basis $A$
for $\mathbb{N}_0$ possesses no essentialities if it has the following 
property :
\\
\\
\noindent
{\em If $B \subseteq A$ is still a basis, of some order, 
then for every $d \geq 2$ and $c \in \{0,1,...,d-1\}$, $B$ contains 
infinitely many elements which are congruent to $c$ modulo $d$.}
\\
\\
\noindent
For want of a better term, a basis with this property shall be called
{\em devolved}. Thus it just remains to construct devolved 
bases, and this we shall now do.
\\
\\
\noindent
Let $h \geq 2$ be given. We construct a devolved
basis $A$ of order $h$. 
The idea is to have 
\be
A = \mathcal{I} \sqcup \mathcal{J}
\ee
where 
\be
\mathcal{I} = \bigsqcup_{n=1}^{\infty} \mathcal{I}_n, \;\;\;\; 
\mathcal{J} = \bigsqcup_{n=1}^{\infty} \mathcal{J}_n
\ee
and the following hold :
\\
\\
{\bf E1.} Each $\mathcal{I}_n$ is a finite interval, say $\mathcal{I}_n = 
[r_n,R_n]$.
\\
{\bf E2.} Each $\mathcal{J}_n$ is a finite arithmetic progression, say
$[s_n,S_n] \cap (c_n + d_n \mathbb{Z})$.
\\
{\bf E3.} $r_1 = 0$ and, for every $n \geq 1$, $r_n < R_n < s_n < S_n < r_{n+1}$.
\\
{\bf E4.} For every $d \geq 2$ and $c \in \{0,1,...,d-1\}$, there
are infinitely many $n \geq 1$ such that $J_n \subseteq c + d\mathbb{Z}$. 
\\
\\
We need to show that an appropriate choice of the parameters 
$r_n,R_n,s_n,S_n$ yields a set $A$ which is a devolved 
basis of order $h$. First of all, let $\mathbb{X}$ be 
the set of all ordered integer triples $(c,d,t)$, where 
$t \geq 1$, $d \geq 2$ and $c \in \{0,1,...,d-1\}$. This is a countable 
set, so let $\mathscr{O}$ be any well-ordering of 
it. We have quite a lot of freedom in the choices of the above 
parameters, but something specific that works is the following recursive 
recipe :
\\
\\
{\bf Step 1.} $R_1 := 2$, $\mathbb{X}^{*} := \{ \}$, $n := 2$, $q := h+n$.
\\
{\bf Step 2.} Let $(c_n,d_n,t_n)$ 
be the least element of $\mathbb{X} \backslash 
\mathbb{X}^{*}$, as defined by the ordering $\mathscr{O}$, such that 
$d_n \leq (h-1)(R_{n-1} - r_{n-1}) + 1$. Take $s_n$ to be the first number
greater than $R_{n-1}$ satisfying $s_n \equiv c_n \; ({\hbox{mod $d_n$}})$ and
take $J_n := [s_n,S_n] \cap (c_n + d_n \mathbb{Z})$, where $S_n$ is the 
smallest number greater than $qs_n$ such that $S_n \equiv c_n 
\; ({\hbox{mod $d_n$}})$. 
\\
{\bf Step 3.} Update $n := n+1$. Take $r_n := S_{n-1} + 1$ and $R_n := 
hr_n$. Update $\mathbb{X}^{*} := \mathbb{X}^{*} \cup 
\{(c_{n-1},d_{n-1},t_{n-1})\}$ 
and go to Step 2. 
\\
\\
It is straightforward to check that our choices ensure
that the set $A$ given by (3.1) and (3.2) satisfies the properties
{\bf E1} through {\bf E4}. It remains to verify the following two claims :
\\
\\
{\sc Claim 1} : $A$ is a basis of order $h$.
\\
{\sc Claim 2} : $A$ is devolved.
\\
\\
{\em Proof of Claim 1.} For each $n \geq 1$ let  
\be
A_n := \left( \bigsqcup_{k=1}^n I_k \right) \sqcup \left( 
\bigsqcup_{k=1}^{n-1} J_k \right).
\ee
Clearly, $hA_1 = [0,hR_1]$. Suppose for some $n \geq 1$ that 
$hA_n = [0,hR_n]$. The choice of $s_n$ guarantees that there is at least one
representation
\be
hR_n + 1 = s_n + \alpha_1 + \cdots + \alpha_{h-1},
\ee
where $\alpha_1,...,\alpha_{h-1} \in A_n$. Then the choice of $d_n$ ensures
that, at least for every $x \in (hR_n,S_n+(h-1)R_n]$, there is at least one
representation 
\be
x = \beta + \alpha_1 + \cdots + \alpha_{h-1},
\ee
where $\beta \in J_n$ and $\alpha_1,...,\alpha_{h-1} \in A_n$. Finally, then, 
the choices of $r_{n+1}$ and $R_{n+1}$ ensure that $hA_{n+1} = [0,hR_{n+1}]$.
This completes the proof of our first claim.
\\
\\
{\em Proof of Claim 2.} Let $n \geq 1$. Since $S_n > (h+n)s_n$, 
any representation of the number $S_n$ as a sum of at most 
$h+n$ elements of $A$ must contain an element from $J_n$. Now let 
$B$ be a subset of $A$ which is still a basis of some order. It follows 
immediately that
$B$ must intersect all but finitely many of the sets 
$J_n$. But then, by property {\bf E4}, $A$ must be devolved.

\ \\

\end{document}